\documentclass[a4paper]{amsart}

\usepackage[english]{babel}
\usepackage[utf8x]{inputenc}
\usepackage[T1]{fontenc}
\usepackage[a4paper,top=3cm,bottom=2cm,left=2.5cm,right=2.5cm,marginparwidth=2cm]{geometry}

\usepackage{amsmath, latexsym, amssymb, amsthm}
\usepackage{graphicx}
\usepackage{enumerate}
\usepackage[colorinlistoftodos]{todonotes}
\usepackage[colorlinks=true, allcolors=blue]{hyperref}

\newtheorem{thm}{Theorem}[section]

\newtheorem{prop}[thm]{Proposition}

\theoremstyle{plain}

\theoremstyle{definition}
\newtheorem{exam}[thm]{Example}

\usepackage{comment}
\usepackage{tikz}
\usepackage{xcolor}
\usepackage{placeins}
\usepackage{float}

\usepackage{booktabs} 
\usepackage{amsaddr}


\usepackage{commath}
\usepackage{threeparttable}

\usepackage{subfigure}

\newcommand\myTabularSpace{1.2}

\newenvironment{psmallmatrix}
  {\left(\begin{smallmatrix}}
  {\end{smallmatrix}\right)}

\usepackage{makecell}
\usepackage{siunitx, mhchem}













\begin{document}

\title[SEMIDEFINITE Programming BOUNDS FOR SPHERICAL THREE-DISTANCE SETS]{SEMIDEFINITE programming BOUNDS FOR SPHERICAL THREE-DISTANCE SETS}

\subjclass[2010]{Primary 52C35; Secondary 14N20, 90C22, 90C05} 
\keywords{spherical few distance set, spherical codes, semidefinite programming, convex optimization}

\author{Feng-Yuan Liu$^1$, Wei-Hsuan Yu$^2$}
\address{$^1$Department of Computer Science, National Tsing Hua University \\$^2$Department of Mathematics, National Central University}
\email{smilepa3034@gmail.com, u690604@gmail.com}
\maketitle

\pagenumbering{arabic}

\begin{abstract}
\normalsize
A spherical three-distance set is a finite collection $X$ of unit vectors in $\mathbb{R}^{n}$ such that for each pair of distinct vectors has three inner product values. We use the semidefinite programming method to improve the upper bounds of spherical three-distance sets for several dimensions. We obtain better bounds in $\mathbb{R}^7$, $\mathbb{R}^{20}$, $\mathbb{R}^{21}$, $\mathbb{R}^{23}$, $\mathbb{R}^{24}$ and $\mathbb{R}^{25}$. In particular, we prove that maximum size of spherical three-distance sets is $2300$ in $\mathbb R^{23}$.  
\end{abstract}

\section{Introduction}
\label{chap:intro}

A spherical $s$-distance set is a finite collection $X$ of unit vectors in $\mathbb{R}^{n}$ such that for each pair of distinct vectors has $s$ inner product values. 
We address the problem to find maximum size of spherical $s$-distance sets in $\mathbb R^n$. The study can be traced from Delsarte, Goethals and Seidel \cite{delsarte1977spherical}. They proved
that the cardinality of a spherical $s$-distance set $X$ is bounded
above by
\begin{equation}\label{DGS}
    \abs{X} \leq   \binom {n+s-1}{n-1} + \binom {n+s-2}{n-1}.
\end{equation}
For instance, if $s=2$, $\, \abs{X} \leq \frac{n(n+3)}2$ and if $s=3$, $\, \abs{X} \leq \frac{n(n+1)(n+5)}6$. 
Maximum spherical $s$-distance sets are close related to tight spherical $t$-designs. The notion of spherical designs was studied by Delsarte, Gothas and Seidel \cite{delsarte1977spherical}. The definition is as follows. 

A finite set of sphere $X$ is called a \emph{spherical $t$-design} in $\mathbb{R}^n$ if the following equality holds:
$$
\frac 1 {|\mathbb{S}^{n-1}|}\int_{\mathbb \mathbb{S}^{n-1}} f ds = \frac{1}{|X|} \sum_{y \in X} f(y)
$$
, where $f$ is a polynomial with degree at most $t$. Spherical designs can be regarded as the well distributed sampling points on sphere.
Also, the size of spherical $t$-designs has the lower bounds: 
\begin{equation}
 \abs{X} \geq 
\begin{cases}
\binom {n+s-1}{n-1} + \binom {n+s-2}{n-1}, &\mbox{if $t$ is even and $t=2s$.} \vspace{15pt} \\

2\binom {n+s-1}{n-1}, &\mbox{if $t$ is odd and $t=2s+1$.}
\end{cases}
\end{equation}
If the size of a spherical design attains above inequality, then we call it a \emph{tight} spherical design. Furthermore, tight spherical designs only have few distinct distances among them (a tight spherical $2s$-design must be a spherical $s$-distance set) \cite{delsarte1977spherical}. For instance, a tight spherical 6-design is a spherical three-distance set. A tight spherical $7$-design is an antipodal four-distance set. If we take half of it, it will become a spherical three-distance set. The half of an antipodal set means that we only take one point out of a pair of antipodal points.    
    
For $s=2$, i.e. maximum size of spherical two-distance sets, there are bunch of works being done by Musin \cite{musin2009spherical}, Barg-Yu \cite{barg2014}, Yu \cite{yu2017new} and Glazyrin-Yu \cite{glazyrin2018upper}. Basically, almost every dimension for maximum size of spherical two-distance sets is known. Maximum size of spherical two-distance sets in $\mathbb{R}^n$ is $\frac{n(n+1)}2$ with possible exceptions when $n=(2k+1)^2-3$, where $k \in \mathbb{N}$ \cite{glazyrin2018upper}. However, for $s=3$, it is known very little, only solved for dimension $n=2$, $3$, $4$, $8$ and $22$. Moreover, we solve this problem for $\mathbb{R}^{23}$ in this paper; the answer is 2300 points with the inner product values $\{\pm \frac 1 3, 0\}$ (\textbf{Table \ref{tab:table 3dis}}). The configuration is half of a tight spherical 7-design and which is a sharp code (a four-distance set and a 7-design) and also an universal optimal code discussed in Cohn-Kumar \cite{cohn2007uniqueness}.
\begin{table}[H] 
\centering
\begin{threeparttable}
\begin{tabular}{p{0.5cm}p{1cm}p{7.8cm}p{3.8cm}}
\toprule
n & Size & Structure & Inner product value\\
\midrule
2 & 7 & Heptagon (attaining bound \eqref{DGS}) & \\
3 & 12 & Icosahedron \cite{shinohara2013uniqueness} & (-1, -$\sqrt{5}$/5, $\sqrt{5}$/5) \\
4 & 13 & \textbf{Theorem 3.5} of \cite{szollHosi2018constructions} & \\
8 & 120 & Subset of $E_8$ root system \cite{musin2011bounds} & (-1/2, 0, 1/2)\\
22 & 2025 & Subset of the minimum vectors in the Leech lattice \cite{musin2011bounds} & (-4/11, -1/44, 7/22)\\
23 & 2300 & Half of a tight spherical 7-design [*new result] & (-1/3, 0, 1/3)\\
\bottomrule
\end{tabular}

\caption{Known result of max spherical three-distance sets in $\mathbb{R}^{n}$}
\label{tab:table 3dis}
\end{threeparttable}
\end{table}

The tight spherical 7-design in $\mathbb{R}^{23}$ is also called kissing configuration with 4600 points and inner product values $\{-1, 0, \pm \frac 1 3 \}$. Interestingly, we prove that half of this configuration is the maximum spherical three-distance set in $\mathbb{R}^{23}$. The uniqueness of tight spherical 7-design is discussed in \cite{bannai1981uniqueness,cuypers2005note,cohn2007uniqueness}. The tight spherical 7-design can be constructed by the subset of Leech Lattice. $\{ x \in \text{Leech  lattice} : \langle x \cdot e_1 \rangle = \frac{1}{2}, \, e_1 = (1,0,0,...,0) \}$. Leech lattice is an elegant configuration and has a lot of interesting properties. For instance, it is the solution of sphere packing problem \cite{cohn2017sphere} and kissing number problem \cite{bachoc2008new, odlyzko1979new} in $\mathbb{R}^{24}$. We add one more nice property that the slice of Leech lattice gives arise to a maximum spherical three-distance set in one lower dimension.  

Tight spherical 7-designs exist in very special dimensions; only when the dimension $n$ is three times square of an integer minus 4, i.e. $n = 3k^2-4$, where $k \in \mathbb{N}$ and $k\geq 2$ \cite{bannai2009survey}. For $k=2$, i.e. $n=8$, the tight spherical 7-design in $\mathbb{R}^8$ is the root system of $E_8$. Musin and Nozaki \cite{musin2011bounds} had proven that half of it (120 points) forms the maximum spherical three-distance set in $\mathbb{R}^8$. Our result basically continues this story to $k=3$, i.e. $n=23$ and we prove that half of tight spherical 7-designs in $\mathbb{R}^{23}$ is a maximum spherical three-distance set in $\mathbb{R}^{23}$. You may keep thinking what happen for $k \geq 4$? Unfortunately, the existence is not clear for $k \geq 4$, i.e. there do not yet exist any constructions for tight spherical 7-designs in dimensions $n=44, 71, 104, \cdots$. Some cases have been proven the nonexistence results by Bannai, Munemasa and Venkov \cite{bannai2005nonexistence}. However, there are still infinitely many cases remaining open. We believe that any tight spherical 7-designs will give arise to maximum spherical three-distance sets. However, the prove is still elusive.


We define the maximum size of spherical $3$-distance sets in $\mathbb{S}^{n-1}$ by $A(\mathbb{S}^{n-1})$, where $\mathbb{S}^{n-1}$ is the unit sphere in $\mathbb{R}^n$.
Our work is motivated by Barg and Yu \cite{barg2013new}. They used the semidefinite programming (SDP) method to improve the upper bounds for the size of spherical two-distance sets and obtain exact values 276 for dimension $n=23$ and $\frac {n(n+1)}2$ for $40\leq n \leq 93$ except for $n=46,78$. 
Musin and Nozaki \cite{musin2011bounds} used Delsarte's linear programming method to improve the upper bounds for $A(\mathbb{S}^{n-1})$. They obtained the exact answers for $ A(\mathbb{S}^{7})=120$ and $A(\mathbb{S}^{21})=2025$. In addition, they also improved upper bounds for dimensions from $n=6$ to $50$ (e.g. $A(\mathbb{S}^{6})\leq 91$). Our contribution is to achieve tighter upper bounds by the semidefinite programming. We proved that $A(\mathbb{S}^{6})\leq 84$, $A(\mathbb{S}^{19})\leq 1540$, $A(\mathbb{S}^{20})\leq 1771$, $A(\mathbb{S}^{23})\leq 2600$, $A(\mathbb{S}^{24})\leq 2925$, and obtain the exact answer for $\mathbb{R}^{23}, A(\mathbb{S}^{22})=2300$. Readers may see the summarized results in \textbf{Table \ref{tab:Final Result}}.

This paper is organized by the following sections: 
Section \ref{section previous} discusses the previous methods : absolute harmonic bounds and linear programming bounds.
Section \ref{section SDP} introduces the notion of semidefinite programming method. 
Section \ref{section sampling} integrates Nozaki theorem (\textbf{Theorem \ref{Nozaki thm}}) and SDP/LP methods to do sampling on the $(0,1)$ interval. 
Section \ref{section sos} uses the sum of squares method to make a rigorous proof between the sampling points.
Section \ref{section discussion} includes the discussions and the conclusions. 

\section{Previous methods}\label{chap:methodology1}\label{section previous} 

In this section, we introduce two previous methods to study the upper bounds of spherical three-distance sets: the harmonic absolute bounds and the linear programming bounds. 

Denote the Gegenbauer polynomials of degree $k$ with dimension parameter $n$ by $G_k^{n}(t)$. They are defined with the following recurrence relation:
 $$G^{n}_{0}(t)=1, G^{n}_{1}(t)=t,$$
$$G^{n}_{k}(t)={\frac{(2k+n-4) \, t \, G^{n}_{k-1}(t)-(k-1)G^{n}_{k-2}(t)}{k+n-3}} ,k\geq{2}.$$

For instance, 
\begin{equation*}
G_{2}^n(t) = \frac{n t^{2} - 1}{n - 1}, \quad
G_{3}^n(t) = \frac{t}{n - 1} \left(n t^{2} + 2 t^{2} - 3\right).
\end{equation*}

Then, these Gegenbauer polynomials play the important roles for the harmonic absolute bounds.

\subsection*{Harmonic absolute bound} \label{sec:harmonic bound}
\par Harmonic absolute bound (HB) is proved by Delsarte \cite{delsarte1973algebraic}\cite{delsarte1973four}\cite{levenshtein1992designs}. Later, Nozaki \cite{nozaki2009new} improved this upper bound.

\begin{thm}(Harmonic absolute bound) \cite{nozaki2009new}\label{Thm Harmonic absolute bound}
\par
Let X be a three-distance set in $\mathbb{S}^{n-1} \subset \mathbb{R}^n $ with $D(X)=\{d_1, d_2 ,d_3 \} $, where $D(X)$ collecting all the inner product value of any two distinct points in $X$. Consider the polynomial $f(x)=\prod_{i=1}^3 (d_i-x)$ and suppose that its expansion in the basis ${G^n_k (x)}$ has the form  $f(x)=\sum\limits_{k=0}^3 f^n_k G^n_k (x)$ expanded in the basis ${G^n_k}$, then $$\abs{X} \leq \sum_{k:f^n_k>0} h^n_k,$$ 
where $h^n_k={\binom{n+k-1}{k}}-{\binom{n+k-3}{k-2}}$ which is the dimension of linear space on all real harmonic homogeneous polynomials of degree $k$. 
\end{thm}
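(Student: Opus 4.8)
The plan is to convert the constraint that every pair of distinct points of $X$ has inner product in $\{d_1,d_2,d_3\}$ into a rank estimate for a matrix assembled from Gegenbauer polynomials. The two ingredients I would use are the \emph{addition formula}
$$G^n_k(\langle y,z\rangle)=\frac{1}{h^n_k}\sum_{i=1}^{h^n_k}Y_{k,i}(y)\,Y_{k,i}(z),$$
where $\{Y_{k,i}\}_{i=1}^{h^n_k}$ is an orthonormal basis of the space of real harmonic homogeneous polynomials of degree $k$ restricted to $\mathbb{S}^{n-1}$, together with the normalization $G^n_k(1)=1$ built into the recurrence above; here $h^n_k=\binom{n+k-1}{k}-\binom{n+k-3}{k-2}$ is exactly the dimension of that space.

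First I would fix the sign of $f$ so that $f(1)>0$; since $f$ has roots $d_1,d_2,d_3$ and these are inner products of \emph{distinct} unit vectors (so $d_i<1$ and $f(1)\neq0$), this amounts to taking $f(x)=\prod_{i=1}^{3}(x-d_i)$. For each $y\in X$ consider the sphere function $z\mapsto f(\langle y,z\rangle)$: it vanishes at every $z\in X\setminus\{y\}$ because $\langle y,z\rangle$ is one of the $d_i$, and it equals $f(1)\neq0$ at $z=y$. Hence the $|X|\times|X|$ matrix $M=\big[f(\langle y,z\rangle)\big]_{y,z\in X}$ equals $f(1)\,I_{|X|}$, so $\operatorname{rank}M=|X|$.

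On the other hand, expanding $f(x)=\sum_{k=0}^{3}f^n_k G^n_k(x)$ and applying the addition formula entrywise gives $M=\sum_{k=0}^{3}\tfrac{f^n_k}{h^n_k}\,Y_kY_k^{\top}$, where $Y_k$ is the $|X|\times h^n_k$ matrix whose row indexed by $y$ is $(Y_{k,1}(y),\dots,Y_{k,h^n_k}(y))$. Now split off the positive part: set $P=\sum_{k:\,f^n_k>0}\tfrac{f^n_k}{h^n_k}Y_kY_k^{\top}$ and $N=\sum_{k:\,f^n_k<0}\tfrac{-f^n_k}{h^n_k}Y_kY_k^{\top}$, both positive semidefinite, so that $M=P-N$. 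The column space of $P$ is the sum of the column spaces of the $Y_k$ with $f^n_k>0$, hence $\operatorname{rank}P\leq\sum_{k:\,f^n_k>0}h^n_k$. But $P=M+N=f(1)\,I+N\succeq f(1)\,I\succ0$, so $P$ has full rank $|X|$. Combining the two estimates, $|X|=\operatorname{rank}P\leq\sum_{k:\,f^n_k>0}h^n_k$, which is the claim.

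The argument is entirely linear-algebraic once the Gegenbauer addition formula is granted, so I would not expect a genuine obstacle. The one point that needs care is the orientation of $f$: one must check $f(1)\neq0$ and arrange $f(1)>0$ so that it is the \emph{positive}-coefficient part $P$, rather than $N$, that is forced to be positive definite — this is the step sensitive to the parity of the number of distances. A secondary, routine point is the identity $h^n_k=\binom{n+k-1}{k}-\binom{n+k-3}{k-2}=\dim\operatorname{Harm}_k(\mathbb{R}^n)$, which is what makes the right-hand side come out as the stated sum; and the identical argument with $\deg f=s$ recovers the Delsarte--Goethals--Seidel bound $\binom{n+s-1}{n-1}+\binom{n+s-2}{n-1}$ when one keeps all degrees $k\leq s$ instead of discarding the ones with nonpositive coefficient.
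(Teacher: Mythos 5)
Your argument is correct, and it is essentially the standard Delsarte--Goethals--Seidel/Nozaki proof: express $M=[f(\langle y,z\rangle)]_{y,z\in X}$ both as $f(1)I$ (using that $f$ kills all off-diagonal inner products) and, via the addition formula, as a difference $P-N$ of positive semidefinite matrices whose positive part has rank at most $\sum_{k:f^n_k>0}h^n_k$; then $P=f(1)I+N\succ0$ forces $|X|=\operatorname{rank}P$. The paper itself gives no proof of this theorem (it is quoted from Nozaki), so there is nothing to diverge from; your route is the intended one. One point worth emphasizing: your sign correction is not merely cosmetic. As literally stated, the theorem takes $f(x)=\prod_{i=1}^3(d_i-x)$, which for an odd number of distances satisfies $f(1)=\prod(d_i-1)<0$; with that orientation it is the \emph{negative} part that would be forced positive definite, and the bound would have to read $\sum_{k:f^n_k<0}h^n_k$. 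The coefficient formulas in the paper's Example 2.2 (e.g.\ $f^n_3=\tfrac{n-1}{n+2}>0$) in fact correspond to $\prod_{i}(x-d_i)$, i.e.\ to your normalization $f(1)>0$, so your version is the internally consistent one and matches the computation that yields $h^n_1+h^n_3=2300$ for $n=23$. No gaps.
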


If the dimension and three inner product values are given, we can calculate the harmonic bounds. See the following example:

\begin{exam}\label{ex:n23_harmonic}
Considering the half of tight spherical 7-designs in $\mathbb{R}^{23}$, the three inner product values in distinct points are $(\frac{-1}{3}, 0, \frac{1}{3})$. We calculate the harmonic bounds with given three inner product values in the following.
\par
Let $f(x) = \prod_{i=1}^3 (d_i-x) = \sum\limits_{k=0}^3 f^n_k G^n_k(x)$, $f^n_k$ can be written as the following term:

\begin{equation*}
\begin{aligned}
f^n_0 &= -d_1 d_2 d_3-{\frac{d_1+d_2+d_3}{n}} \\
f^n_1 &= d_1 d_2+d_1 d_3+d_2 d_3+{\frac{3}{n+2}} \\
f^n_2 &= {\frac{1-n}{n}(d_1+d_2+d_3)} \\
f^n_3 &= {\frac{n-1}{n+2}} \\
\end{aligned}
\end{equation*}

Substitute $f^n_k$ with the dimension $n = 23$, $(d_1, d_2, d_3) = (\frac{-1}{3}, 0, \frac{1}{3})$, then $f^n_1, f^n_3 > 0$, $f^n_0, f^n_2 \leq 0$. Thus, the harmonic absolute bound is $$\abs{X} \leq \sum_{k:f^n_k>0} h^n_k = h^n_1 + h^n_3 = 23 + 2277 = 2300. $$ 
\end{exam}

\subsection*{Linear programming bound} \label{LP bound}\label{sec:LP bound}
\par
Linear programming (LP) is another method to estimate the upper bound of spherical few distance sets. This theorem is established by Delsarte \cite{delsarte1977spherical}. Musin and Nozaki \cite{musin2011bounds} incorporate LP method and Nozaki theorem (\textbf{Theorem \ref{Nozaki thm}}) to obtain the upper bound of spherical codes.

\begin{thm}(Delsarte's inequality) \cite{delsarte1977spherical}
\label{thm Delsarte Inequality}
\par
For any finite set of points $X\subset \mathbb{S}^{n-1}$
$$\sum\limits_{(x,y)\in X^2}G_k^{n}(\langle x, y \rangle)\geq 0,  \forall k \in \mathbb{N}.$$
\end{thm}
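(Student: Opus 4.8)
The plan is to prove the inequality through the classical addition formula for spherical harmonics, which realizes each Gegenbauer polynomial $G_k^{n}$ as a positive-semidefinite kernel on $\mathbb{S}^{n-1}$, after which the statement becomes a one-line sum-of-squares estimate. First I would recall that the restriction to $\mathbb{S}^{n-1}$ of the polynomials decomposes, as an $O(n)$-module, into the subspaces $\mathrm{Harm}_k$ of spherical harmonics of degree $k$, whose dimension is exactly $h_k^{n}=\binom{n+k-1}{k}-\binom{n+k-3}{k-2}$; I would then fix an orthonormal basis $Y_{k,1},\dots,Y_{k,h_k^{n}}$ of $\mathrm{Harm}_k$ with respect to the normalized surface measure $\sigma$ on the sphere.

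The key step is the identity
$$\sum_{i=1}^{h_k^{n}} Y_{k,i}(x)\,Y_{k,i}(y)\;=\;h_k^{n}\,G_k^{n}(\langle x,y\rangle),\qquad x,y\in\mathbb{S}^{n-1}.$$
There are two natural ways to see it. (i) The left-hand side is the reproducing kernel of $\mathrm{Harm}_k$ inside $L^2(\mathbb{S}^{n-1},\sigma)$, hence independent of the chosen orthonormal basis; it is therefore invariant under the diagonal action of $O(n)$ and, being the restriction of a degree-$k$ polynomial in each variable, is a function of $\langle x,y\rangle$ alone, say $c_k\,Z_k(\langle x,y\rangle)$ with $Z_k$ a degree-$k$ zonal harmonic; the Gegenbauer polynomials are precisely these zonal harmonics, and matching the value at $x=y$ (the sum equals $h_k^{n}$ by orthonormality, while $G_k^{n}(1)=1$ in the normalization used in the text) forces $c_k=h_k^{n}$ and $Z_k=G_k^{n}$. (ii) Alternatively, one verifies directly that $h_k^{n}\,G_k^{n}(\langle x,\cdot\rangle)$ reproduces $\mathrm{Harm}_k$, which follows from the Funk--Hecke formula together with the orthogonality relations of the $G_k^{n}$. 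In a paper of this scope it is legitimate simply to invoke this as the standard addition theorem.

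Granting the addition formula, the conclusion is immediate: for any finite $X\subset\mathbb{S}^{n-1}$,
$$\sum_{(x,y)\in X^2} G_k^{n}(\langle x,y\rangle)=\frac{1}{h_k^{n}}\sum_{i=1}^{h_k^{n}}\sum_{x\in X}\sum_{y\in X}Y_{k,i}(x)\,Y_{k,i}(y)=\frac{1}{h_k^{n}}\sum_{i=1}^{h_k^{n}}\Bigl(\sum_{x\in X}Y_{k,i}(x)\Bigr)^{2}\;\geq\;0,$$
since $h_k^{n}>0$ and each inner sum is a real number. For $k=0$ this reads $|X|^{2}\geq 0$, consistent with $G_0^{n}\equiv 1$.

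The only genuine obstacle is the addition formula itself, i.e. producing the explicit ``square'' representation of $G_k^{n}$; the final summation is trivial once that representation is in hand. A fully self-contained argument must check that the zonal harmonic arising from the reproducing kernel of $\mathrm{Harm}_k$ obeys the same three-term recurrence and normalization as the $G_k^{n}$ defined in the text — a routine but not entirely short computation — whereas if one is content to use the representation theory of $O(n)$ on $L^2(\mathbb{S}^{n-1})$ and the Funk--Hecke formula, essentially nothing remains to be done.
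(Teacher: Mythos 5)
Your proof is correct and follows exactly the route the paper indicates: the paper offers no proof of its own beyond the remark that ``Delsarte proved this inequality by the addition formula for spherical harmonics,'' and your argument fills in precisely that — the addition formula $\sum_i Y_{k,i}(x)Y_{k,i}(y)=h_k^n G_k^n(\langle x,y\rangle)$ followed by the sum-of-squares identity $\sum_{(x,y)\in X^2}G_k^n(\langle x,y\rangle)=\frac{1}{h_k^n}\sum_i\bigl(\sum_{x\in X}Y_{k,i}(x)\bigr)^2\geq 0$. The normalization check $G_k^n(1)=1$ is consistent with the recurrence given in the paper, so no further work is needed.
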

Delsarte proved this inequality by the addition formula for spherical harmonics. With this linear inequality, we can derive Delsarte linear programming bound for the spherical three-distance sets. 

\begin{thm}(Delsarte's linear programming bound) \cite{musin2011bounds}\cite{delsarte1977spherical}\label{thm DelsarteLP}\label{thm:DelsarteLP}
\par
Let $X\in \mathbb{S}^{n-1}$ be a finite set and assume that for any $x,y \in X$, $\langle x, y\rangle \in \{d_1,d_2,d_3\}$. Then the cardinality of $X$ is bounded above by the solution of the following linear programming problem:

\begin{subequations}
\begin{align}
& \underset{}{\text{maximize}}
& & 1 + x_1 + x_2 + x_3 \\
& \text{subject to}
& & 1 + x_1 G_k^{n}(d_1) + x_2 G_k^{n}(d_2) + x_3 G_k^{n}(d_3)\geq 0, \forall k \in \mathbb{N}  \label{eq:LP_Delsart_inequality}\\ 
&&& x_j\geq0, j=1,2,3. \label{eq:LP_Nature_conditon} 
\end{align}
\end{subequations}
\end{thm}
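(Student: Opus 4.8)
The plan is to run the standard averaging (``first moment'') argument: given an arbitrary spherical three-distance set $X$ with $|X| = N$, I will exhibit an explicit feasible point of the linear program whose objective value is exactly $N$, which forces $N$ to be at most the LP optimum.

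First I would record the normalization $G_k^{n}(1) = 1$ for every $k \ge 0$. This holds for $k = 0, 1$ by definition, and for $k \ge 2$ substituting $t = 1$ into the recurrence gives $G_k^{n}(1) = \bigl((2k+n-4) - (k-1)\bigr)/(k+n-3) = (k+n-3)/(k+n-3) = 1$, so induction finishes it. Then, for $j = 1,2,3$, let $m_j$ be the number of ordered pairs $(x,y) \in X^2$ with $\langle x, y\rangle = d_j$, and set $x_j := m_j/N$. Clearly $x_j \ge 0$, so \eqref{eq:LP_Nature_conditon} holds. Since the $d_j$ are inner products of \emph{distinct} unit vectors they are strictly less than $1$, so the $N$ diagonal pairs $(x,x)$ are counted by no $m_j$; hence $m_1 + m_2 + m_3 = N^2 - N$ and $1 + x_1 + x_2 + x_3 = 1 + (N^2 - N)/N = N$, which is the objective value at this point.

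Next I would verify the Gegenbauer constraints \eqref{eq:LP_Delsart_inequality} using Delsarte's inequality (Theorem \ref{thm Delsarte Inequality}). For a fixed $k$, split the sum $\sum_{(x,y)\in X^2} G_k^{n}(\langle x,y\rangle)$ into its diagonal part, which equals $N \cdot G_k^{n}(1) = N$ by the normalization above, and its off-diagonal part, which equals $\sum_{j=1}^{3} m_j\, G_k^{n}(d_j)$ because $X$ is a three-distance set. Delsarte's inequality asserts that the total is $\ge 0$, so dividing by $N > 0$ yields $1 + \sum_{j=1}^{3} x_j\, G_k^{n}(d_j) \ge 0$, which is exactly the required inequality. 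Thus $(x_1,x_2,x_3)$ is feasible for the program with objective value $N$, and therefore $|X| = N$ is at most the optimal value of the LP.

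There is no genuine obstacle here once Theorem \ref{thm Delsarte Inequality} is in hand; the whole content is the bookkeeping above. The one point worth flagging is that the program carries infinitely many constraints, indexed by $k \in \mathbb{N}$, all of which are met simultaneously by the single vector constructed above; to turn this into a computable bound one restricts to finitely many $k$, or dually produces a nonnegative polynomial certificate of degree at most the number of distances, and it is precisely this dual/certificate viewpoint that the semidefinite strengthening in the later sections builds on.
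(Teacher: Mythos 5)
Your proposal is correct and is exactly the argument the paper intends: the theorem is stated as a direct consequence of Delsarte's inequality (Theorem \ref{thm Delsarte Inequality}), and your averaging construction $x_j = m_j/|X|$ with the normalization $G_k^{n}(1)=1$ is the standard derivation the authors are citing from \cite{delsarte1977spherical} and \cite{musin2011bounds}. The bookkeeping (diagonal versus off-diagonal pairs, division by $|X|$) is all accurate, so nothing is missing.
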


Therefore, if the dimension $n$ and the three inner product values $d_1,d_2,d_3$ are given, we can solve above LP problem to obtain the upper bounds for the size of a spherical three-distance set. For instance, if we set $n=23, (d_1,d_2,d_3)=(-1/3,0,1/3)$ (half of a tight $7$-design in $\mathbb{R}^{23}$) and $k \leq 18$, we can obtain the upper bound $2300$ which is coherent to the harmonic bound Example \ref{ex:n23_harmonic}.  



\section{Semidefinite programming method}
\label{chap:methodology2}\label{section SDP}


\subsection{Semidefinite Programming}\label{sec:SDP bound}
  A semidefinite program (SDP) is an optimization problem
of the form
 \cite{vandenberghe1996semidefinite}
\begin{equation*}
\begin{aligned}
& \underset{}{\text{minimize}}
& & c^Tx \\
& \text{subject to}
& & F(x)\succeq 0, 
\end{aligned}
\end{equation*}

where 
$$F(x)\triangleq F_0+\sum\limits_{i=1}^m x_iF_i.$$
The vector $c \in \mathbb{R}^m$ and $F_0$, $\cdots$, $F_m$ are symmetric matrices in $\mathbb{R}^{n\times n}$. The inequality sign in $F(x)\succeq0$ means that $F(x)$ is positive semidefinite, i.e.,  
$$z^{T}Fz \geq 0, \forall z \in \mathbb{R}^{n}.$$

SDP is an extension of the linear programming which has been used on bounding the size of codes with a given set of restrictions. For instance, the kissing number problem is asking that how many unit sphere that can touch the center one unit sphere without overlapping. This problem is equivalent to ask the maximum size of spherical codes such that each pair of points has inner product values in the interval $[-1,1/2]$.  
Bachoc-Vallentin \cite{bachoc2008new} used SDP to improve the upper bounds for the kissing number problem. We adapted their formula to the case of spherical three-distance sets. The SDP method might get tighter upper bounds than linear programming method, since we use the matrix constraints which is additional to the linear constraints.

Following \cite{bachoc2008new},
define the matrices $Y_k^n(u,v,t)$ and $S_k^n(u,v,t)$ with size $(p_{SDP}-k+1)\times(p_{SDP}-k+1)$, where $p_{SDP}$ is the parameter of SDP matrix constraints.

For all $0 \leq i,j \leq p_{SDP}-k,$ 
$$(Y_k^n(u,v,t))_{ij}=u^iv^j((1-u^2)(1-v^2))^{\frac{k}{2}}G_k^{n-1}(\frac{t-uv}{\sqrt{(1-u^2)(1-v^2)}}), 0 \leq k \leq p_{SDP},$$ 
$$S_k^n(u,v,t) =\frac{1}{6} \sum\limits_{\sigma \in S_3}Y_k^n(\sigma(u,v,t)), $$ 
$$S_k^n(1,1,1) = \mathtt{0}_{M}, k\geq 1,$$

where $\sigma(u,v,t)$ is all the permutation over permutation group and $S_3$. $\mathtt{0}_{M}$ is the zero matrix.

Then
$$\sum\limits_{(x,y,z)\in X^3} S_k^n(x\cdot y,x\cdot z,y\cdot z)\succeq 0.$$


\subsection*{Semidefinite programming bound} \label{SDP bound}
  
Barg and Yu \cite{barg2013new} used SDP to obtain the upper bounds of spherical two-distance sets. We extend their approach to three-distance set to achieve the upper bounds for spherical three-distance sets. 

\begin{thm} \label{thm SDP primal}
Let $p_{LP}, p_{SDP}$ be the parameter of LP constraints and SDP maxtix constraints.\footnote{ In our paper, we set $(p_{LP}, p_{SDP}) = (18, 6)$. Besides, we also tried with $(p_{LP}, p_{SDP}) = (18, 5)$, then our experiment is not able to get the upper bound $A(\mathbb{S}^{22}) \leq 2300$. We conclude that the matrix condition with $p_{SDP} = 6$ is crucial, though we do not have theoretical explain.} If $X$ is a spherical three-distance set with inner product values $d_1,d_2,d_3$, the cardinality of $X$ is bounded above by the solution of the following semidefinite programming problem:
\begin{subequations}
\begin{align}
& \underset{}{\text{maximize}}
& & 1+\frac{1}{3}(x_1+x_2+x_3) \label{cond_SDP:1}\\
& \text{subject to}
& & \begin{pmatrix}
1&0\\
0&0
\end{pmatrix}
+{\frac{1}{3}}
\begin{pmatrix}
0&1\\
1&1
\end{pmatrix}
(x_1+x_2+x_3) +
\begin{pmatrix}
0&0\\
0&1
\end{pmatrix}
\sum_{i=4}^{13} x_i 
\succeq 0, \label{cond_SDP:2}\\
&&& 3+x_1 G_{k}^{n}(d_1)+x_2 G_{k}^{n}(d_2)+x_3 G_{k}^{n}(d_3)\geq 0, \, k=1,2,\cdots,p_{LP}, \label{cond_SDP:3}\\
&&& 
\begin{aligned}
&S_{k}^{n}(1,1,1)+x_1 S_{k}^{n}(d_1,d_1,1)
+x_2 S_{k}^{n}(d_2,d_2,1)+x_3 S_{k}^{n}(d_3,d_3,1)\\
&+x_4 S_{k}^{n}(d_1,d_1,d_1)
+x_5 S_{k}^{n}(d_2,d_2,d_2)
+x_6 S_{k}^{n}(d_3,d_3,d_3)\\
&+x_7 S_{k}^{n}(d_1,d_1,d_2)
+x_8 S_{k}^{n}(d_1,d_1,d_3)
+x_9 S_{k}^{n}(d_2,d_2,d_1)\\
&+x_{10} S_{k}^{n}(d_2,d_2,d_3)
+x_{11} S_{k}^{n}(d_3,d_3,d_1)
+x_{12} S_{k}^{n}(d_3,d_3,d_2)\\
&+x_{13} S_{k}^{n}(d_1,d_2,d_3)  \succeq  0, \, k=0,1,2,\cdots,p_{SDP}, 
\end{aligned} \label{cond_SDP:4} \\ 
&&& x_j  \geq 0, \, j=1,2,\cdots,13. \label{cond_SDP:5}
\end{align}
\end{subequations}
In this theorem, the variables $x_i$ refer to the number of triple points in $X$ associated to some combinations of inner product $d_1,d_2,d_3$ and $1$. For instance, $x_1$ is related to the counting of triple points in $X$ such that the three inner product values are $(d_1,d_1,1)$. Similar setting, $x_2$ is for $(d_2,d_2,1)$ and so on to $x_{13}$ linking to $(d_1,d_2,d_3)$.
\end{thm}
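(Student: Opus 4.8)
The plan is to show that any spherical three-distance set $X$ with inner product values $d_1,d_2,d_3$ produces a point $(x_1,\dots,x_{13})$ that is feasible for the program in the statement and whose objective value equals $\abs{X}$; since the program is a maximization, this forces $\abs{X}$ to be at most its optimal value.

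First I would put $N=\abs{X}$ and record two families of counting numbers. For $i=1,2,3$, let $\alpha_i$ be the number of ordered pairs $(x,y)\in X^2$ with $x\neq y$ and $\langle x,y\rangle=d_i$. The symmetric group $S_3$ acts on $\{d_1,d_2,d_3\}^3$ by permuting coordinates with exactly ten orbits, represented by the triples $(d_i,d_i,d_i)$, the triples $(d_i,d_i,d_j)$ with $i\neq j$, and $(d_1,d_2,d_3)$; for each orbit $\tau$ let $\beta_\tau$ be the number of ordered triples $(x,y,z)\in X^3$ of pairwise distinct points with $(\langle x,y\rangle,\langle x,z\rangle,\langle y,z\rangle)\in\tau$. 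These ten orbits are exactly the ten argument-patterns carried by $x_4,\dots,x_{13}$ in \eqref{cond_SDP:4}, and I would set $x_i:=\tfrac3N\alpha_i$ for $i=1,2,3$ and $x_{3+j}:=\tfrac1N\beta_{\tau_j}$ for $j=1,\dots,10$, with $\tau_j$ the $j$-th orbit in the order of \eqref{cond_SDP:4}. The asymmetric normalizations $\tfrac3N$ and $\tfrac1N$ are chosen precisely so that the constant $3$ in \eqref{cond_SDP:3}, the factor $\tfrac13$ in \eqref{cond_SDP:1}, and the coincidence-multiplicity $3$ that appears below all line up; indeed, scaling Delsarte's inequality forces the factor $3/N$ on $\alpha_i$.

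The two analytic inputs are Delsarte's inequality (Theorem~\ref{thm Delsarte Inequality}) and the matrix inequality $\sum_{(x,y,z)\in X^3}S_k^n(\langle x,y\rangle,\langle x,z\rangle,\langle y,z\rangle)\succeq 0$ recorded in Section~\ref{section SDP}; everything else is bookkeeping. In $\sum_{(x,y)\in X^2}G_k^n(\langle x,y\rangle)\ge 0$, separating the diagonal (which contributes $N$ because $G_k^n(1)=1$) yields $N+\sum_i\alpha_i G_k^n(d_i)\ge 0$, and scaling by $3/N$ is exactly \eqref{cond_SDP:3}. For the matrix sum I would partition $X^3$ by the coincidence pattern of $(x,y,z)$: the all-equal triples contribute $N\,S_k^n(1,1,1)$ (which vanishes for $k\ge1$); a triple with exactly two equal points contributes, using the $S_3$-invariance of $S_k^n$, a term $S_k^n(d_i,d_i,1)$ with $d_i$ the inner product between the repeated point and the third one, and since the repeated pair can sit in three positions this sums to $\sum_i 3\alpha_i\,S_k^n(d_i,d_i,1)$; and the pairwise-distinct triples contribute $\sum_\tau\beta_\tau\,S_k^n(r_\tau)$ for any representative $r_\tau$ of $\tau$, since $S_k^n$ only sees the orbit. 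Dividing the resulting inequality by $N$ is \eqref{cond_SDP:4}, and \eqref{cond_SDP:5} is just nonnegativity of the counts.

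It remains to check \eqref{cond_SDP:2} and evaluate the objective. Because $X$ is a distance set, every point is at one of $d_1,d_2,d_3$ from every other point, so $\sum_i\alpha_i=N(N-1)$ and $\sum_\tau\beta_\tau=N(N-1)(N-2)$ exactly; hence $\tfrac13(x_1+x_2+x_3)=N-1$ and $\sum_{i=4}^{13}x_i=(N-1)(N-2)$, and the matrix in \eqref{cond_SDP:2} becomes $\begin{psmallmatrix}1 & N-1\\ N-1 & (N-1)^2\end{psmallmatrix}$, which is positive semidefinite as the rank-one matrix $vv^{\top}$ with $v=(1,N-1)^{\top}$ (equivalently, the Cauchy--Schwarz bound $\sum_{x\in X}(N-1)^2\ge\tfrac1N\big(\sum_{x\in X}(N-1)\big)^2$ at equality). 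The objective \eqref{cond_SDP:1} then equals $1+\tfrac13(x_1+x_2+x_3)=1+(N-1)=N=\abs{X}$, so the constructed point is feasible with objective $\abs{X}$, and the theorem follows. I expect the only genuinely delicate step to be the symmetrization bookkeeping for the three-point term: tracking the coincidence cases and the ten $S_3$-orbits with the correct multiplicities and matching them to the thirteen matrices in \eqref{cond_SDP:4}.
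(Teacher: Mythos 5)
Your proposal is correct: the paper states this theorem without proof, deferring to the Bachoc--Vallentin and Barg--Yu adaptations it cites, and your feasibility argument is exactly the standard one those works use. All the bookkeeping checks out --- the $3/N$ versus $1/N$ normalizations, the multiplicity $3$ from the three positions of the repeated pair in the triple sum, the ten $S_3$-orbits matching $x_4,\dots,x_{13}$, and the rank-one matrix $\begin{psmallmatrix}1 & N-1\\ N-1 & (N-1)^2\end{psmallmatrix}$ in \eqref{cond_SDP:2} --- so your write-up in fact fills a gap the paper leaves open.
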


\section{Discrete sampling points with Nozaki theorem}
\label{chap:methodology3}\label{section sampling}
With given three inner product values $d_1, d_2, d_3$, we can consider the harmonic absolute bound (\textbf{Section \ref{sec:harmonic bound}}), the linear programming bound (\textbf{Section \ref{sec:LP bound}}), and the semidefinite programming bound (\textbf{Section \ref{sec:SDP bound}}). Furthermore, when the size of $X$ is big enough, there are some relations among $d_1, d_2, d_3$. In this section, we introduce Nozaki's Theorem. With this result, we can write $d_1, d_2$ as the function of $d_3$, then we can reduce three variables $d_1, d_2, d_3$ to uni-variable $d_3$. After reducing to uni-variable $d_3$, we do sampling points in the interval $d_3 \in (0,1)$ \footnote{If $d_1 < d_2 < d_3$ and $d_3 < 0$, then the cardinality of the set is at most $2n+1$ by Rankin's bound \cite{rankin1947closest}.}.

First, define $$(K_1, K_2,K_3) = (\frac{(d_2-1)(d_3-1)}{(d_2-d_1)(d_3-d_1)},\frac{(d_1-1)(d_3-1)}{(d_1-d_2)(d_3-d_2)},\frac{(d_1-1)(d_2-1)}{(d_1-d_3)(d_2-d_3)}). $$ 
\par Then, if the size of spherical three-distance set is large enough, then $K_i$ will be integers and bounded above.  

\begin{thm}(Nozaki Theorem) \cite{nozaki2011generalization}
\par \label{Nozaki thm} If $X$ is a spherical three-distance set and the size of $X$ is greater than or equal to $2N(\mathbb{S}^{n-1})$, then $(K_1, K_2, K_3)$ are all integers and $K_i$ is bounded by the following: $$\abs{K_i}\leq\lfloor{1/2}+\sqrt{N(\mathbb{S}^{n-1})^2/(2N(\mathbb{S}^{n-1})-2)+1/4}\rfloor, \, i = 1, 2, 3. $$ where $N(\mathbb{S}^{n-1}) := h^n_0+h^n_1+h^n_2$.
\end{thm}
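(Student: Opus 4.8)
The plan is to reduce the three inner-product values to a graph-spectral problem. Write $v=|X|$, let $M=\big(\langle x,y\rangle\big)_{x,y\in X}$ be the Gram matrix, and for $i=1,2,3$ let $A_i$ be the adjacency matrix of the graph $G_i$ on $X$ in which $x\sim y$ precisely when $\langle x,y\rangle=d_i$; each $A_i$ is then a symmetric $0/1$ matrix with zero diagonal and $A_1+A_2+A_3=J-I$. First I would introduce the degree-$2$ Lagrange polynomial $F_i(t)=\prod_{j\neq i}\tfrac{t-d_j}{d_i-d_j}$, which satisfies $F_i(d_j)=\delta_{ij}$ and $F_i(1)=K_i$; applying $F_i$ entrywise to $M$ and splitting an entry according to its four possible values $1,d_1,d_2,d_3$ yields the identity $F_i(M)=A_i+K_iI$.

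Next I would invoke the positivity built into the Gegenbauer polynomials: expanding $F_i=\sum_{k=0}^{2}f_{i,k}\,G^n_k$, the addition formula shows each entrywise matrix $G^n_k(M)$ is positive semidefinite of rank at most $h^n_k$, so $F_i(M)=\sum_k f_{i,k}G^n_k(M)$ has rank at most $N:=h^n_0+h^n_1+h^n_2=N(\mathbb{S}^{n-1})$. Hence $\operatorname{rank}(A_i+K_iI)\le N$, and since $A_i$ is symmetric this says $-K_i$ is an eigenvalue of $A_i$ of multiplicity $m_i=\dim\ker(A_i+K_iI)\ge v-N$, which is $\ge N\ge 1$ because $v\ge 2N$. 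Integrality of $K_i$ then follows by a Galois argument: $A_i$ has integer entries, so $-K_i$ is an algebraic integer and each of its $\mathbb{Q}$-conjugates is an eigenvalue of $A_i$ with the same multiplicity $m_i$; if $K_i$ were irrational, two distinct conjugates would already contribute $2m_i\ge 2(v-N)\ge v$ eigenvalues, forcing $v=2N$ and $A_i$ to have exactly the two eigenvalues $\pm K_i$, each of multiplicity $v/2$ (the signs opposite since $\operatorname{tr}A_i=0$), whence $A_i^2=K_i^2I$; then $G_i$ is a $K_i^2$-regular graph in which no two vertices share a neighbour, which is impossible unless $K_i^2\in\{0,1\}$ — again rational. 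So $K_i\in\mathbb{Q}$, and being an algebraic integer, $K_i\in\mathbb{Z}$.

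Finally I would read off the numerical bound from the same spectral data. Let $\mu_1,\dots,\mu_{r_i}$ be the eigenvalues of $A_i$ other than $-K_i$, so $r_i=v-m_i\le N$; from $\operatorname{tr}A_i=0$ and $\operatorname{tr}(A_i^2)=\sum_{x}k_i(x)$ (the degree sum of $G_i$) one gets $\sum_j\mu_j=m_iK_i$ and $\sum_j\mu_j^2=\sum_x k_i(x)-m_iK_i^2$, and Cauchy--Schwarz over these $r_i$ eigenvalues gives $v\,m_iK_i^2\le r_i\sum_x k_i(x)$. Combining this with $r_i\le N\le m_i$ and with a bound on the degrees $k_i(x)$ — where the local remark that $\{\,y-d_ix:\langle x,y\rangle=d_i\,\}$ is, up to scaling, a spherical three-distance set in $\mathbb{S}^{n-2}$ keeps $\sum_x k_i(x)$ under control — and then optimizing over $K_i$ should produce $2(N-1)\,|K_i|\,(|K_i|-1)\le N^2$, i.e.\ exactly $|K_i|\le\big\lfloor\tfrac12+\sqrt{N^2/(2N-2)+\tfrac14}\,\big\rfloor$. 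The hard part will be this last estimate: the rank argument already gives $|K_i|=O(\sqrt N)$, but extracting the sharp constant — the $|K_i|(|K_i|-1)$ shape and the denominator $2(N-1)$ in place of $2N$ — will need one to first show $G_i$ is regular and then sharpen the crude Cauchy--Schwarz step to a tighter interlacing/trace inequality. The integrality half, by contrast, is essentially forced by the rank bound.
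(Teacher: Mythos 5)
First, a point of reference: the paper does not prove this statement at all --- it is quoted verbatim from \cite{nozaki2011generalization} --- so the comparison here is with Nozaki's original argument rather than with anything in this text. The first two thirds of your proposal are correct and are essentially that argument: the identity $F_i(M)=A_i+K_iI$, the rank bound $\operatorname{rank}F_i(M)\le h^n_0+h^n_1+h^n_2=N$ via the addition formula, the resulting multiplicity $m_i\ge v-N\ge N$ of the eigenvalue $-K_i$ of the integer matrix $A_i$, and the Galois/multiplicity argument (including the boundary case $v=2N$, where $A_i^2=K_i^2I$ forces $K_i^2\in\{0,1\}$) correctly yield that $K_i$ is a rational algebraic integer, hence an integer.

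The genuine gap is the quantitative bound, and it is not a detail: this inequality is exactly what makes the enumeration of triples $(K_1,K_2,K_3)$ in Table \ref{tab:Ki Number} finite, so the paper's results depend on it. Your Cauchy--Schwarz step $v\,m_iK_i^2\le r_i\sum_x k_i(x)$ together with $r_i\le N\le m_i$ gives only $K_i^2\le\frac1v\sum_x k_i(x)$, the average degree of $G_i$; with the trivial bound $k_i(x)\le v-1$ this is $K_i^2\le\frac{N(v-1)}{v-N}\le 2N-1$, weaker by essentially a factor of $4$ than the claimed $|K_i|(|K_i|-1)\le N^2/(2N-2)\approx N/2$. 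Neither proposed repair closes this. There is no reason for $G_i$ to be regular for a general three-distance set. And the derived-code observation bounds $k_i(x)$ by the maximum size of a three-distance set in $\mathbb{S}^{n-2}$, which by \eqref{DGS} is of order $n^3/6$, whereas $N=\frac{n(n+3)}{2}$ is of order $n^2/2$; so that degree bound is no better than the trivial one and nowhere near the $\sum_x k_i(x)\lesssim vN/2$ your inequality would require. (Also, ``optimizing over $K_i$'' is not meaningful, since $K_i$ is the unknown being bounded.) So the estimate $|K_i|\le\lfloor 1/2+\sqrt{N^2/(2N-2)+1/4}\rfloor$ is not established by your argument; it needs a genuinely sharper rank/multiplicity lemma than plain Cauchy--Schwarz over the non-$(-K_i)$ eigenvalues, and you must either supply that lemma or defer to Nozaki's proof for this half of the theorem.
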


The numbers $(K_1, K_2, K_3)$ also satisfy the following equation \cite{musin2011bounds}. 
\begin{equation}
\label{equation: Nozaki Theorem}
\left\{
\begin{aligned}
K_1 + K_2 + K_3 &= 1 \\
d_1K_1 + d_2K_2 + d_3K_3 &= 1 \\
d_1^{2}K_1 + d_2^{2}K_2 + d_3^{2}K_3 &= 1.
\end{aligned}
\right.
\end{equation}

\par By the observation of $(K_1, K_2, K_3)$ and simple calculation, we can get more properties among $(K_1, K_2, K_3)$. (\textbf{Prop \ref{prop Ki}})

\begin{prop} \label{prop Ki}
$K_i$ has the following properties (when $-1 \leq d_1 < d_2 < d_3 < 1$):
\begin{enumerate}
\item $\abs{K_1} < \, \abs{K_2}$
\item $K_1K_2 < 0$
\item $K_1\neq0, K_2\neq0,  K_3\neq0$.
\end{enumerate}
\end{prop}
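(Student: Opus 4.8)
The plan is to work directly from the explicit formula
$$(K_1, K_2, K_3) = \left(\frac{(d_2-1)(d_3-1)}{(d_2-d_1)(d_3-d_1)},\ \frac{(d_1-1)(d_3-1)}{(d_1-d_2)(d_3-d_2)},\ \frac{(d_1-1)(d_2-1)}{(d_1-d_3)(d_2-d_3)}\right),$$
together with the hypothesis $-1 \le d_1 < d_2 < d_3 < 1$, and simply track signs of the six factors $d_i - 1$ and $d_i - d_j$. Since each $d_i < 1$, every factor of the form $d_i - 1$ is strictly negative; since the $d_i$ are strictly ordered, each difference $d_i - d_j$ is nonzero with a sign determined by the order. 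This already gives property (3): all three numerators are products of two strictly negative reals, hence strictly positive, and all three denominators are products of nonzero reals, hence nonzero, so $K_1, K_2, K_3 \ne 0$.

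For property (2), I would compute the sign of $K_1 K_2$ from the factored form. The numerators of $K_1$ and $K_2$ are both positive (two negative factors each), so $\operatorname{sign}(K_1 K_2)$ equals the sign of the product of the four denominator factors $(d_2-d_1)(d_3-d_1)(d_1-d_2)(d_3-d_2)$. Here $(d_2-d_1) > 0$, $(d_3-d_1) > 0$, $(d_1-d_2) < 0$, $(d_3-d_2) > 0$, giving three positive factors and one negative, hence the product is negative and $K_1 K_2 < 0$. (Equivalently, one can observe that the common factor $(d_3-1)$ in the numerators and the shared structure make $K_1/K_2 = -\tfrac{(d_2-1)(d_1-d_2)(d_3-d_2)}{(d_1-1)(d_2-d_1)(d_3-d_1)}$, manifestly a negative number times a positive ratio.)

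Property (1), $|K_1| < |K_2|$, is the one requiring an actual inequality rather than a sign check. Using the factored forms, $\tfrac{|K_1|}{|K_2|} = \tfrac{(d_2-1)}{(d_1-1)} \cdot \tfrac{|d_1-d_2|\,|d_3-d_2|}{|d_2-d_1|\,|d_3-d_1|} = \tfrac{d_2-1}{d_1-1}\cdot\tfrac{d_3-d_2}{d_3-d_1}$, where I have cancelled the common factor $|d_1-d_2| = |d_2-d_1|$. Both surviving ratios are ratios of positive quantities: $\tfrac{d_2-1}{d_1-1} = \tfrac{1-d_2}{1-d_1} < 1$ because $d_1 < d_2$ forces $1 - d_2 < 1 - d_1$ (both positive), and $\tfrac{d_3-d_2}{d_3-d_1} < 1$ because $d_1 < d_2$ forces $d_3 - d_2 < d_3 - d_1$ (both positive). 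The product of two numbers each strictly less than $1$ and positive is strictly less than $1$, hence $|K_1| < |K_2|$.

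The main obstacle is essentially bookkeeping: making sure the cancellation of $|d_1 - d_2|$ against $|d_2 - d_1|$ in the ratio $|K_1|/|K_2|$ is done carefully (it is legitimate since these are equal in absolute value), and that no factor is accidentally zero — which is exactly why property (3) should be established first, as it guarantees $K_2 \ne 0$ so that the ratio $|K_1|/|K_2|$ makes sense. Once the sign analysis of the six linear factors is laid out in a small table, all three claims follow without any genuine computation; I would present it in that order: (3) first (nonvanishing and positivity of numerators), then (2) (one negative denominator factor), then (1) (product of two sub-unit ratios). No appeal to the system \eqref{equation: Nozaki Theorem} or to the size hypothesis of Theorem \ref{Nozaki thm} is needed — this proposition is purely about the algebraic expressions under the stated interval constraint.
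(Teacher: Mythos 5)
Your proof is correct, and it is exactly the ``simple calculation'' the paper alludes to (the paper states Proposition \ref{prop Ki} without writing out a proof): a sign analysis of the six linear factors in the explicit formula for $(K_1,K_2,K_3)$, plus the observation that $|K_1|/|K_2| = \frac{1-d_2}{1-d_1}\cdot\frac{d_3-d_2}{d_3-d_1}$ is a product of two positive ratios each less than $1$. No discrepancy with the paper's intended argument.
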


Moreover, we can solve the system of equations \ref{equation: Nozaki Theorem}. Without loss of generality, we suppose that $d_1 < d_2 < d_3$ and then we can get the roots by $\mathtt{Matlab}$ $\mathtt{Symbolic}$ $\mathtt{Toolbox}$ \cite{matlabsymbolic}: $$(d_1, d_2) = ({\frac{K_1 - d_3 K_1 K_3-(d_3-1)\sqrt{-K_1 K_2 K_3}}{K_1(K_1 + K_2)}}, {\frac{K_2 - d_3 K_2 K_3+(d_3-1)\sqrt{-K_1 K_2 K_3}}{K_2(K_1 + K_2)}})$$
or $$(d_1^{*}, d_2^{*}) = ({\frac{K_1 - d_3 K_1 K_3+(d_3-1)\sqrt{-K_1 K_2 K_3}}{K_1(K_1 + K_2)}}, {\frac{K_2 - d_3 K_2 K_3-(d_3-1)\sqrt{-K_1 K_2 K_3}}{K_2(K_1 + K_2)}})$$

\par However $(d_1^{*}, d_2^{*})$ is invalid. 
By proposition \ref{prop Ki}, 
$$d_1^{*}-d_2^{*} = \frac{(d_3 - 1)\sqrt{-K_1 K_2 K_3}}{K_1 K_2} \geq 0.$$

It makes a contradiction with initial supposition $d_1^{*}<d_2^{*}$. Therefore, the following theorem holds.

\begin{thm}
Suppose $d_1<d_2<d_3$. By the system of equations \ref{equation: Nozaki Theorem}, $d_1,d_2$ can be solved in the following formula:

\begin{equation*}
\begin{aligned}
d_1 &= {\frac{K_1 - d_3 K_1 K_3-(d_3-1)\sqrt{-K_1 K_2 K_3}}{K_1(K_1 + K_2)}} \\
d_2 &= {\frac{K_2 - d_3 K_2 K_3+(d_3-1)\sqrt{-K_1 K_2 K_3}}{K_2(K_1 + K_2)}}
\end{aligned}
\end{equation*}
\end{thm}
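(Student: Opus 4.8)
The plan is to verify that the two candidate solutions written just above the statement are in fact the complete solution set of the system \eqref{equation: Nozaki Theorem} for $(d_1,d_2)$ with $d_3$ held fixed, and then to eliminate the starred branch using Proposition~\ref{prop Ki}, exactly as sketched in the paragraph preceding the theorem. First I would treat the first two equations of \eqref{equation: Nozaki Theorem} as two linear equations in the unknowns $K_1,K_2$ once $K_1+K_2+K_3=1$ is used to write $K_3 = 1-K_1-K_2$; alternatively, and more symmetrically, I would use \eqref{equation: Nozaki Theorem} together with the definition of $(K_1,K_2,K_3)$ to regard $d_1,d_2$ as the two roots of a single quadratic. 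Concretely, subtracting suitable multiples of the three equations shows that $d_1$ and $d_2$ are the roots of
\begin{equation*}
K_1(K_1+K_2)\,X^2 - \bigl(2K_1 - 2d_3K_1K_3\bigr)X + \bigl(\text{const in }d_3,K_i\bigr) = 0,
\end{equation*}
so that the two ordered assignments $(d_1,d_2)$ and $(d_1^{*},d_2^{*})$ obtained by the two choices of sign of the square root $\sqrt{-K_1K_2K_3}$ (which is real by item~(2) of Proposition~\ref{prop Ki}, since $K_1K_2<0$ forces $K_1K_2K_3$ to have sign $-\operatorname{sgn}(K_3)$, and one checks $-K_1K_2K_3\ge 0$) are the only possibilities. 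This is the routine-calculation part and I would quote the Matlab symbolic computation already cited in the excerpt rather than reproduce it.

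The substantive step is ruling out $(d_1^{*},d_2^{*})$. Here I would compute the difference
\begin{equation*}
d_1^{*} - d_2^{*} = \frac{(d_3-1)\sqrt{-K_1K_2K_3}}{K_1K_2},
\end{equation*}
and argue about its sign: $d_3 - 1 < 0$ since $d_3 < 1$; $\sqrt{-K_1K_2K_3} \ge 0$ and is nonzero by item~(3) of Proposition~\ref{prop Ki} (all $K_i \neq 0$); and $K_1 K_2 < 0$ by item~(2). Hence the quotient is $(\text{neg})\cdot(\text{pos})/(\text{neg}) \ge 0$, i.e. $d_1^{*} \ge d_2^{*}$, contradicting the standing assumption $d_1 < d_2 < d_3$ that we imposed when ordering the distances. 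Therefore the starred branch cannot occur, and the unstarred formulas give $d_1,d_2$.

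Two small points will need care. First, I should make sure the quadratic genuinely has $d_1 \ne d_2$ so that the two branches are actually distinct and the elimination argument is not vacuous; this follows because $\sqrt{-K_1K_2K_3} > 0$ strictly, again from Proposition~\ref{prop Ki}(2)--(3), so $d_1 < d_2$ strictly in the unstarred branch. Second, one should confirm that the unstarred assignment really does satisfy $d_1 < d_2$ (not just $d_1 \ne d_2$): from the displayed difference with the sign of the radical flipped, $d_1 - d_2 = -\,(d_3-1)\sqrt{-K_1K_2K_3}/(K_1K_2) \le 0$, consistent with the hypothesis, so no further case splitting on the sign of $K_i$ is needed. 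The main obstacle, such as it is, is purely bookkeeping: correctly carrying the elimination of $K_3$ through the quadratic and matching signs of the radical to the ordering of the $d_i$; there is no conceptual difficulty once Proposition~\ref{prop Ki} is in hand.
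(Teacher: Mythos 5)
Your proposal is correct and follows essentially the same route as the paper: the paper likewise obtains the two sign-branches from the \texttt{Matlab} symbolic solution of the system and then discards $(d_1^{*},d_2^{*})$ by computing $d_1^{*}-d_2^{*}=\frac{(d_3-1)\sqrt{-K_1K_2K_3}}{K_1K_2}\geq 0$ and invoking Proposition~\ref{prop Ki} to contradict $d_1^{*}<d_2^{*}$. One small caveat: your ``more symmetric'' framing of $d_1$ and $d_2$ as the two roots of a single quadratic $K_1(K_1+K_2)X^2-\cdots=0$ is not literally right (the displayed formulas for $d_1$ and $d_2$ have different denominators, $K_1(K_1+K_2)$ versus $K_2(K_1+K_2)$, so the system is not symmetric in $d_1,d_2$ unless $K_1=K_2$; the quadratic arises in one variable after eliminating the other via the linear equation), but since you ultimately defer to the cited symbolic computation for this step, exactly as the paper does, this does not affect the argument.
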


\begin{thm} (Improved bounds of \cite{musin2011bounds} with SDP)
\par
Let $\mathfrak{D}(\mathbb{S}^{n-1})$ be the set of all possible spherical three distances $D(X) = \{d_1,d_2,d_3\}$ such that $(K_1, K_2, K_3)$ are integers. For each $D\in \mathfrak{D}(\mathbb{S}^{n-1})$, we have two bounds: Harmonic absolute bound HB [\ref{Thm Harmonic absolute bound}] and semidefinite programming bound SDP [\ref{SDP bound}]. The following result holds:
\par
Let $$B(D):=\min_{D \in \mathfrak{D}(\mathbb{S}^{n-1})} \{ SDP(D), HB(D)\}, \, \footnote{ In Musin and Nozaki's paper \cite{musin2011bounds}, they use two bounds: harmonic absolute bound $HB$ and Delsarte's linear programming bound $LP$. Therefore, they define the upper bound $B(D) := \min_{D \in \mathfrak{D}(\mathbb{S}^{n-1})} \{LP(D), HB(D)\}. $ }$$ 
\par
then $$A(\mathbb{S}^{n-1})\leq \max_{D \in \mathfrak{D}(\mathbb{S}^{n-1})} \{B(D),2N(\mathbb{S}^{n-1})-1\} . $$
\end{thm}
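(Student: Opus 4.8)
The plan is to run a simple dichotomy on the size of a largest spherical three-distance set and to feed the two bounds of Sections \ref{sec:harmonic bound} and \ref{sec:SDP bound} into Nozaki's Theorem. Let $X \subset \mathbb{S}^{n-1}$ be a spherical three-distance set with $|X| = A(\mathbb{S}^{n-1})$, and write $D(X) = \{d_1, d_2, d_3\}$ with $d_1 < d_2 < d_3$ for its set of inner products; let $(K_1,K_2,K_3)$ be the associated quantities. Set $B(D) := \min\{SDP(D), HB(D)\}$ as in the statement.

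First I would dispose of the case $|X| < 2N(\mathbb{S}^{n-1})$: then $|X| \le 2N(\mathbb{S}^{n-1}) - 1$, which is already $\le \max_{D \in \mathfrak{D}(\mathbb{S}^{n-1})}\{B(D), 2N(\mathbb{S}^{n-1})-1\}$, and nothing remains to prove. In the complementary case $|X| \ge 2N(\mathbb{S}^{n-1})$, Nozaki's Theorem (Theorem \ref{Nozaki thm}) applies and tells us that $K_1, K_2, K_3$ are integers (indeed bounded), so by definition $D(X) \in \mathfrak{D}(\mathbb{S}^{n-1})$. On the other hand $X$ is an admissible configuration for the harmonic absolute bound (Theorem \ref{Thm Harmonic absolute bound}) with distances $d_1,d_2,d_3$, so $|X| \le HB(D(X))$, and it is an admissible configuration for the semidefinite programming bound (Theorem \ref{thm SDP primal}) with the same distances, so $|X| \le SDP(D(X))$. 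Hence $|X| \le \min\{SDP(D(X)), HB(D(X))\} = B(D(X)) \le \max_{D \in \mathfrak{D}(\mathbb{S}^{n-1})}\{B(D), 2N(\mathbb{S}^{n-1})-1\}$. Combining the two cases yields $A(\mathbb{S}^{n-1}) = |X| \le \max_{D \in \mathfrak{D}(\mathbb{S}^{n-1})}\{B(D), 2N(\mathbb{S}^{n-1})-1\}$, which is the assertion.

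One book-keeping remark I would include: in the second case $d_3 \ge 0$ necessarily, since if $d_3 < 0$ then all three inner products are negative and Rankin's bound gives $|X| \le 2n+1 < 2N(\mathbb{S}^{n-1}) - 1$, contradicting $|X| \ge 2N(\mathbb{S}^{n-1})$. This is precisely why the sampling in Section \ref{section sampling} may restrict to $d_3 \in (0,1)$ without loss.

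The logical skeleton above is routine; the genuine obstacle is that $\mathfrak{D}(\mathbb{S}^{n-1})$ is not a finite set. Nozaki's bound leaves only finitely many integer triples $(K_1,K_2,K_3)$, but for each such triple the remaining freedom in $d_3 \in (0,1)$ — together with the formulas expressing $d_1,d_2$ as functions of $d_3$ — parametrizes an entire interval of distance sets, so the right-hand side $\max_{D \in \mathfrak{D}(\mathbb{S}^{n-1})}\{B(D), \dots\}$ is a priori a supremum over a union of curves. Turning it into an effective finite number requires controlling $D \mapsto B(D)$ uniformly along each interval; that is exactly the role of the discrete sampling of $d_3$ in Section \ref{section sampling} and of the sum-of-squares certificates in Section \ref{section sos}, which I would invoke at this point. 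This continuous-to-discrete reduction, rather than the dichotomy itself, is where the real work lies.
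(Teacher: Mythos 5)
Your proposal is correct and follows essentially the same argument the paper (implicitly, via Musin--Nozaki) relies on: the dichotomy on $|X|$ versus $2N(\mathbb{S}^{n-1})$, with Nozaki's Theorem forcing $D(X)\in\mathfrak{D}(\mathbb{S}^{n-1})$ in the large case and both HB and SDP then applying to $X$. Your closing remark that the real work is taming the continuum of $d_3$ values per integer triple $(K_1,K_2,K_3)$ is exactly what the paper's Sections \ref{section sampling} and \ref{section sos} are for, so nothing further is needed.
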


We show the upper bound in the following figure: 

\begin{figure}[H]
    \center{\includegraphics[width=\textwidth]
    {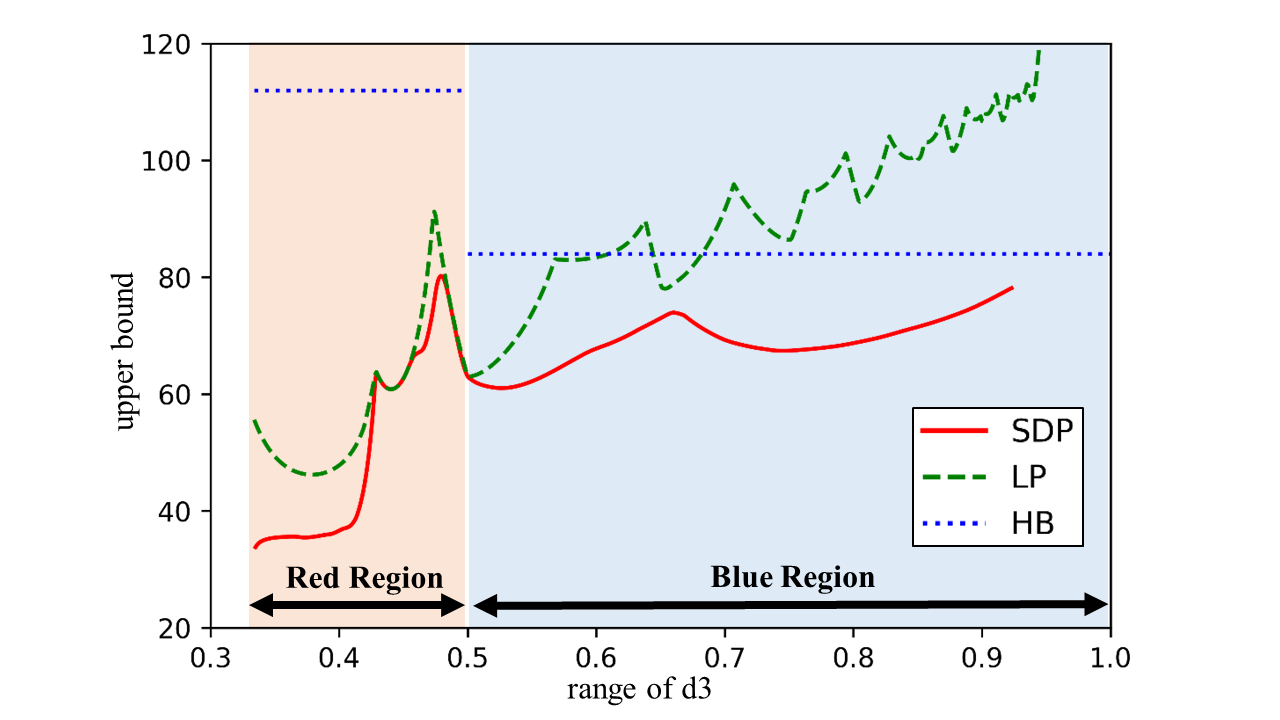}}
    \caption{
    Sampling points figure with $(K_1, K_2, K_3) = (1,-3,3)$ on $\mathbb{R}^{7}$
    \label{fig:N7New} }
\end{figure}

In the blue region of \textbf{Figure \ref{fig:N7New}}, LP/SDP bound will be unbounded as $d_3$ is close to $1$. Thus, we use harmonic bound to control this area. Furthermore, the harmonic bound in this area is always less than or equal to $h_1 + h_3$. On the other hand, the performance of LP/SDP bounds are always better than harmonic bound in the red region. However, in some cases of $(K_1, K_2, K_3)$, LP bound is worse than the harmonic bound of the blue region. In fact, SDP plays a crucial role to dominate the red region with much smaller upper bound. We can see how SDP bounds perform better than LP bounds in the following table [\ref{tab:SDP<LP}]:


\begin{table}[H]
\begin{tabular}{rrrrr}
\toprule
 Dimension & $(K_1, K_2, K_3)$ &     Max LP of Red region &    Max SDP of Red region &  HB of Blue region \\
\midrule
         7 &  (1, -3, 3) &    (91.22, ($d_3$ = 0.474)) &    (80.23, ($d_3$ = 0.479)) &                 84 \\
\specialrule{0em}{\myTabularSpace pt}{\myTabularSpace pt}
        20 &  (1, -4, 4) &  (1589.65, ($d_3$ = 0.543)) &   (756.18, ($d_3$ = 0.543)) &               1540 \\
\specialrule{0em}{\myTabularSpace pt}{\myTabularSpace pt}
        21 &  (2, -6, 5) &  (1867.02, ($d_3$ = 0.420)) &  (1332.83, ($d_3$ = 0.420)) &               1771 \\
\specialrule{0em}{\myTabularSpace pt}{\myTabularSpace pt}
        23 &  (1, -3, 3) &   (2385.60, ($d_3$ = 0.590)) &  (1072.29, ($d_3$ = 0.593)) &               2300 \\
        23 &  (2, -6, 5) &  (2319.82, ($d_3$ = 0.421)) &  (1693.01, ($d_3$ = 0.430)) &               2300 \\
        23 &  (3, -8, 6) &  (2300.03, ($d_3$ = 0.332)) &  (2298.12, ($d_3$ = 0.333)) &               2300 \\
\specialrule{0em}{\myTabularSpace pt}{\myTabularSpace pt}
        24 &  (1, -5, 5) &  (2821.84, ($d_3$ = 0.500)) &  (1594.81, ($d_3$ = 0.500)) &               2600 \\
        24 &  (1, -4, 4) &  (2681.29, ($d_3$ = 0.556)) &  (1759.73, ($d_3$ = 0.556)) &               2600 \\
        24 &  (1, -3, 3) &   (2758.20, ($d_3$ = 0.589)) &   (1293.60, ($d_3$ = 0.596)) &               2600 \\
\specialrule{0em}{\myTabularSpace pt}{\myTabularSpace pt}
        25 &  (1, -5, 5) &  (4138.41, ($d_3$ = 0.511)) &  (2472.46, ($d_3$ = 0.522)) &               2925 \\
        25 &  (1, -4, 4) &  (3210.08, ($d_3$ = 0.559)) &  (2238.27, ($d_3$ = 0.559)) &               2925 \\
        25 &  (1, -3, 3) &  (3166.53, ($d_3$ = 0.588)) &  (1883.75, ($d_3$ = 0.600)) &               2925 \\
\bottomrule
\end{tabular}
\caption{Sampling points table with SDP bounds perform better than LP bounds \label{tab:SDP<LP} }
\end{table}


There are many possibilities of $(K_1, K_2, K_3)$ in $\mathbb{R}^n$ when the dimension $n$ is given. Our SDP method only improves the upper bounds of some $(K_1, K_2, K_3)$, but these improvements indeed make a contribution on improving the overall upper bounds. For instance, for $n=23$ and $K_i=(1,-3,3)$ and $(2,-6,5)$, the LP bounds are $2385$ and $2319$ respectively. SDP bounds are $1072$ and $1693$ respectively which are the crucial parts that SDP can prove the upper bounds to $2300$, but LP can not. There are $338$ different possible choices of $(K_1, K_2, K_3)$ in these dimensions. We improve $12$ cases of $(K_1, K_2, K_3)$ of them. (\textbf{Table \ref{tab:Ki Number}})

\begin{table}[H]
\centering
\begin{tabular}{rrr}
\toprule
Dimension & Improved $(K_1, K_2, K_3)$ & Total $(K_1, K_2, K_3)$ \\
\midrule
$7$ & $1$ & $6$\\
$20$ & $1$ & $55$\\
$21$ & $1$ & $55$\\
$23$ & $3$ & $66$\\
$24$ & $3$ & $78$\\
$25$ & $3$ & $78$\\
\bottomrule
\end{tabular}
\caption{Numbers of improved $(K_1, K_2, K_3)$ with respect to total numbers}
\label{tab:Ki Number}
\end{table}

Concatenating the SDP bound of red region and the harmonic bound of blue region, we can get new upper bound on $\mathbb{R}^7$, $\mathbb{R}^{20}$, $\mathbb{R}^{21}$, $\mathbb{R}^{23}$, $\mathbb{R}^{24}$ and $\mathbb{R}^{25}$. (\textbf{Table \ref{tab:Final Result}})

\begin{table}[H]
\centering
\begin{tabular}{rrr}
\toprule
Dimension & Original bound \cite{musin2011bounds} & New bound \\
\midrule
$7$ & $91$ & $84$\\
$20$ & $1541$ & $1540$\\
$21$ & $1772$ & $1771$\\
$23$ & $2301$ & $2300$\\
$24$ & $2601$ & $2600$\\
$25$ & $2926$ & $2925$\\
\bottomrule
\end{tabular}
\caption{Our new result on upper bounds of max spherical three-distance sets}
\label{tab:Final Result}
\end{table}

\section{Rigorous proof with sum of squares (SOS) decomposition} \label{section sos}
We show the SDP bounds on the sampling points in the previous section. However, there might be big oscillation for the upper bounds between the sampling points. We need the rigorous proof to ensure the upper bound between the sampling points also bounded well. We extend the approach from Barg and Yu \cite{barg2013new}. They proved the upper bound of spherical two-distance sets with sum of squares decomposition method. 

In the first step, we write original SDP problem (\textbf{Thm \ref{thm SDP primal}}) into the dual form:

\begin{thm}(SDP Dual Form of Theorem \ref{thm SDP primal}) \label{thm SDP dual}
\begin{subequations}
\begin{align}
& \underset{}{\text{minimize}}
& & 1+\{\sum\limits_{i=1}^{p_{LP}} \alpha_i + \beta_{11}+\langle F_0, S_0^n(1,1,1)\rangle \} \\
& \text{subject to}
& & 
\begin{pmatrix}
\beta_{11}&\beta_{12}\\
\beta_{12}&\beta_{22}
\end{pmatrix}\succeq 0 \\
&&& 2\beta_{12}+\beta_{22}+\sum_{i=1}^{p_{LP}}(\alpha_iG_i^{n}(d_1))+3\sum_{i=0}^{p_{SDP}} \langle F_i , S_i^n(d_1,d_1,1)\rangle\leq -1 \label{SOS_con1} \\
&&& 2\beta_{12}+\beta_{22}+\sum_{i=1}^{p_{LP}}(\alpha_iG_i^{n}(d_2))+3\sum_{i=0}^{p_{SDP}} \langle F_i , S_i^n(d_2,d_2,1)\rangle\leq -1 \label{SOS_con2} \\
&&& 2\beta_{12}+\beta_{22}+\sum_{i=1}^{p_{LP}}(\alpha_iG_i^{n}(d_3))+3\sum_{i=0}^{p_{SDP}} \langle F_i , S_i^n(d_3,d_3,1)\rangle\leq -1 \label{SOS_con3} \\
&&& \beta_{22}+\sum_{i=0}^{p_{SDP}}\langle F_i,S_i^n(y_1,y_2,y_3)\rangle\leq 0 \label{SOS_con4}\\ 
&&& \alpha_i\geq 0, \, i=1,2,\cdots,p_{LP} \\
&&& F_i\succeq 0, \, i=0,1,2,\cdots,p_{SDP}
\end{align}
\end{subequations}

where 
\begin{equation*}
\begin{aligned}
(y_1,y_2,y_3)\in\{
& (d_1,d_1,d_1),(d_2,d_2,d_2),(d_3,d_3,d_3), (d_1,d_1,d_2),(d_1,d_1,d_3), \\
& (d_2,d_2,d_1),(d_2,d_2,d_3),(d_3,d_3,d_1),(d_3,d_3,d_2),(d_1,d_2,d_3)\}
\end{aligned}
\end{equation*}
\end{thm}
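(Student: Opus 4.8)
The statement is that the semidefinite program displayed in Theorem~\ref{thm SDP dual} is the conic (Lagrangian) dual of the primal program in Theorem~\ref{thm SDP primal}, so the plan is to carry out the standard SDP-duality computation and verify that it reproduces the written constraints and objective verbatim. First I would put the primal in conic standard form: the decision vector is $(x_1,\dots,x_{13})$, the objective is the linear functional $1+\tfrac13(x_1+x_2+x_3)$, and the relaxable constraints live in a product of three cones — the nonnegative orthant $\mathbb{R}_{\geq 0}^{p_{LP}}$ for the scalar inequalities \eqref{cond_SDP:3}, the PSD cone of $2\times 2$ symmetric matrices for \eqref{cond_SDP:2}, and the product over $k=0,\dots,p_{SDP}$ of PSD cones of the relevant sizes for the matrix inequalities \eqref{cond_SDP:4} — while the sign constraints \eqref{cond_SDP:5} I would keep intact as an explicit orthant constraint on $x$ rather than fold into the conic part, since they are what turns the dual equalities into inequalities.

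Next I would introduce dual multipliers $\alpha_i\geq 0$ for each scalar inequality in \eqref{cond_SDP:3}, a PSD matrix $\beta=\begin{psmallmatrix}\beta_{11}&\beta_{12}\\\beta_{12}&\beta_{22}\end{psmallmatrix}$ for \eqref{cond_SDP:2}, and PSD matrices $F_i$ ($i=0,\dots,p_{SDP}$) for \eqref{cond_SDP:4}, and form the Lagrangian by adding to the objective the quantity $\alpha_i$ times the left-hand side of each inequality in \eqref{cond_SDP:3}, plus $\langle\beta,\,\cdot\,\rangle$ paired with the left-hand side of \eqref{cond_SDP:2}, plus $\langle F_i,\,\cdot\,\rangle$ paired with the left-hand sides in \eqref{cond_SDP:4}; each added term is nonnegative on the primal-feasible set, so the Lagrangian dominates $1+\tfrac13(x_1+x_2+x_3)$ there. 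Collecting the coefficient of each $x_j$ and using $\langle\beta,\begin{psmallmatrix}1&0\\0&0\end{psmallmatrix}\rangle=\beta_{11}$, $\langle\beta,\begin{psmallmatrix}0&1\\1&1\end{psmallmatrix}\rangle=2\beta_{12}+\beta_{22}$, $\langle\beta,\begin{psmallmatrix}0&0\\0&1\end{psmallmatrix}\rangle=\beta_{22}$, together with $S_k^n(1,1,1)=\mathtt{0}_{M}$ for $k\geq 1$ (so only $F_0$ survives in the constant term), the coefficients of $x_1,x_2,x_3$ yield \eqref{SOS_con1}–\eqref{SOS_con3}, and the coefficients of $x_4,\dots,x_{13}$ — all of which multiply the single matrix $\begin{psmallmatrix}0&0\\0&1\end{psmallmatrix}$ in \eqref{cond_SDP:2} — yield the ten inequalities collected in \eqref{SOS_con4}, while the constant term is exactly $1+\{\sum_i\alpha_i+\beta_{11}+\langle F_0,S_0^n(1,1,1)\rangle\}$. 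Requiring that the supremum of the Lagrangian over $x\geq 0$ be finite is precisely what forces every $x_j$-coefficient to be $\leq 0$; the only nuisance is the bookkeeping of the scalars $\tfrac13$ (in the objective) and $3$ (the constant in \eqref{cond_SDP:3}), which after normalizing \eqref{cond_SDP:3} to have constant term $1$ (equivalently, rescaling $\alpha_i$) and clearing the resulting denominators gives the $-1$ right-hand sides and the displayed coefficients.

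Finally, since $x\geq 0$ is kept intact, minimizing this finite supremum over dual-feasible $(\alpha,\beta,F)$ is, by weak duality, an upper bound for the primal optimum, which already makes every dual-feasible tuple a valid upper bound for $|X|$ via Theorem~\ref{thm SDP primal}; if one also wants the two optimal values to agree, it suffices to note that the primal is strictly feasible whenever it is bounded, so Slater's condition removes the gap (and for $d_3\to 1$ the primal is unbounded and the dual correspondingly infeasible, consistent with the blue-region remark). I do not expect a genuine obstacle in this theorem: it is a mechanical dualization, and the only points demanding care are the $2\times 2$ block shared across all thirteen variables and the constant bookkeeping. The substantive work is deferred to Section~\ref{section sos}, where a feasible dual tuple $(\alpha,\beta,F)$ is converted into sum-of-squares certificates that remain valid over a continuum of values of $d_3$.
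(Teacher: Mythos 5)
Your dualization is correct and is exactly the (standard weak-duality) derivation the paper intends --- the paper states Theorem~\ref{thm SDP dual} without writing out a proof, and your bookkeeping of the $2\times 2$ multiplier $\beta$, the rescaling of the $\alpha_i$ that turns the constant $3$ in \eqref{cond_SDP:3} into the $-1$ right-hand sides, and the observation that $S_k^n(1,1,1)=\mathtt{0}_M$ for $k\geq 1$ leaves only $F_0$ in the objective all check out. The only inessential extra is the Slater/strong-duality remark: weak duality alone is what the paper uses, since any dual-feasible tuple already certifies the upper bound.
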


Constrains (\ref{SOS_con1})(\ref{SOS_con2})(\ref{SOS_con3})(\ref{SOS_con4}) impose positive conditions on the uni-variate polynomials of $d_3$ ($d_1, d_2$ can be written as the functions of $d_3$), denoting $d_3$ as $a$. The following steps can transform the constraints to matrices positive semidefinite conditions. First, we extend the non-negative polynomial valid from the original small interval $a \in [a_1, a_2]$ to whole real numbers by the following theorem:

\begin{thm} 
If $f(a)$ is a polynomial of degree $m$ satisfies $f(a) \geq 0$ for $a \in [a_1,a_2]$, then
$$f^+(a)=(1+a^2)^mf(\frac{a_1+a_2a^2}{1+a^2})\geq 0, \, \forall a\in \mathbb{R}$$ 
\end{thm}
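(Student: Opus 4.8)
The plan is to realize the substitution $a\mapsto \phi(a):=\dfrac{a_1+a_2a^2}{1+a^2}$ as a parametrization of the interval $[a_1,a_2]$ by the whole real line, and then to factor $f^+$ as the composition $f\circ\phi$ multiplied by a manifestly positive quantity. First I would note that the denominator $1+a^2$ is strictly positive for every real $a$, so $\phi$ is a well-defined continuous map $\mathbb{R}\to\mathbb{R}$, and that multiplying $f(\phi(a))$ by $(1+a^2)^m$ has a chance of producing a polynomial precisely because $\deg f=m$.

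Next I would pin down the image of $\phi$. Assuming, as we may since $[a_1,a_2]$ is the relevant interval, that $a_1\le a_2$, the identities
$$\phi(a)-a_1=\frac{(a_2-a_1)a^2}{1+a^2}\ge 0,\qquad a_2-\phi(a)=\frac{a_2-a_1}{1+a^2}\ge 0$$
show that $\phi(a)\in[a_1,a_2]$ for all $a\in\mathbb{R}$ (in fact $\phi(\mathbb{R})=[a_1,a_2)$, with $\phi(0)=a_1$ and $\phi(a)\to a_2$ as $|a|\to\infty$). Since $f$ is assumed nonnegative on all of $[a_1,a_2]$, this immediately gives $f(\phi(a))\ge 0$ for every real $a$.

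Then I would verify that $f^+$ is genuinely a polynomial: expanding $f(x)=\sum_{j=0}^m c_j x^j$ and substituting $x=\phi(a)$,
$$f^+(a)=(1+a^2)^m\sum_{j=0}^m c_j\frac{(a_1+a_2a^2)^j}{(1+a^2)^j}=\sum_{j=0}^m c_j\,(a_1+a_2a^2)^j(1+a^2)^{m-j},$$
which is a polynomial in $a$ of degree at most $2m$; the single factor $(1+a^2)^m$ exactly clears all denominators, which is where $\deg f=m$ enters. Finally, since $(1+a^2)^m>0$ and $f(\phi(a))\ge 0$ for all $a\in\mathbb{R}$, the product $f^+(a)=(1+a^2)^m f(\phi(a))$ is nonnegative on all of $\mathbb{R}$, as claimed.

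There is no real obstacle here; the proof is essentially a change of variables. The only two points deserving a word of care are confirming that $\phi$ maps $\mathbb{R}$ \emph{into} $[a_1,a_2]$ rather than overshooting it, and confirming that multiplying by $(1+a^2)^m$ (and not a higher power) suffices to clear denominators — and both are settled by the displayed identities above.
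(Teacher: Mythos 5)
Your proof is correct and follows essentially the same route as the paper: show that the substitution maps all of $\mathbb{R}$ into $[a_1,a_2]$, so $f$ composed with it is nonnegative, and then multiply by the positive factor $(1+a^2)^m$. The only difference is cosmetic — you establish the containment via the two algebraic identities for $\phi(a)-a_1$ and $a_2-\phi(a)$, whereas the paper argues by monotonicity of $g$ on $[0,\infty)$ together with evenness; your version is if anything slightly more direct, and your added check that $f^+$ is genuinely a polynomial is a sensible bonus the paper omits.
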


\begin{proof}
Define $g(a) = \frac{a_1 + a_2 a^2}{1 + a^2} $ and consider the values of g on $a \in [0,\infty)$. The function values on the boundary points of the interval $[0,\infty)$ are $g(0) = a_1$, and $\lim_{a \to \infty} g(a) = a_2$. $g$ is an increasing function for $a \in [0,\infty)$ since $g'(a) = \frac{2a(a_2-a_1)}{(a^2+1)^2} \geq 0$ . Consequently, $f(g(a))$ is non-negative for $a \in [0,\infty)$. Besides, $g(a)$ is an even function, $f(g(a))$ is also non-negative for $a \in (-\infty, 0]$. Therefore, $f^+(a)=(1+a^2)^mf(g(a))$ is non-negative for all real numbers.
\end{proof}

Second, Hilbert proved that a non-negative polynomial on whole real numbers can be written as the sum of squares \cite{hilbert1888darstellung}.
$f^{+}(a)=\sum_ir_i^2(a)$, where $r_i$ are polynomials. Finally, according to the result of Nesterov \cite{nesterov2000squared}, a polynomial $f^{+}(a)$ can be written as sum of squares form if and only if there exists a positive semidefinite matrix $Q$ such that $f^{+}=XQX^T$, where $X=(1,a,a^2,\cdots,a^m)$. Therefore, constraints (\ref{SOS_con1})(\ref{SOS_con2})(\ref{SOS_con3})(\ref{SOS_con4}) can be transformed into positive semidefinite matrices conditions. 

Finally, if there are some positive semidefinite matrices $M_1, M_2, ..., M_k$, then we can deduce that the block matrix $ \begin{psmallmatrix} 
M_1 &       &       &  0   \\
    & M_2   &       &     \\
    &       & ... &     \\
0    &       &       & M_k  \\
\end{psmallmatrix}$ is also positive semidefinite. We use this technique to make our implementation of \textbf{sos decomposition} more convenient with $\mathtt{YALMIP}$ \cite{Lofberg2009}, and show the result in table \ref{tab:Sos Decompostion}. The sos decomposition technique can guarantee that there will not be problems between the sampling points. The values are all smaller than the harmonic bound of blue region, then we complete the rigorous proof. 

\begin{table}[H]
\centering
\begin{tabular}{rllr}
\toprule
 Dimension & $(K_1, K_2, K_3)$ & \thead{Sos Decompostion Value \\(covered Max SDP in Red Region)} &  \thead{Harmonic Bound \\ (Blue Region)} \\
\midrule
         7 &  (1, -3, 3) &                          sos(0.475, 0.480) = 80.29 &                 84 \\
        20 &  (1, -4, 4) &                         sos(0.540, 0.545) = 804.06 &               1540 \\
        21 &  (2, -6, 5) &                        sos(0.415, 0.420) = 1343.66 &               1771 \\
        23 &  (1, -3, 3) &                        sos(0.590, 0.595) = 1234.62 &               2300 \\
        23 &  (2, -6, 5) &                        sos(0.425, 0.430) = 1703.71 &               2300 \\
        23 &  (3, -8, 6) &                        sos(0.332, 0.335) = 2300.85 &               2300 \\
        24 &  (1, -5, 5) &                        sos(0.495, 0.500) = 1594.80 &               2600 \\
        24 &  (1, -4, 4) &                        sos(0.555, 0.560) = 2159.78 &               2600 \\
        24 &  (1, -3, 3) &                        sos(0.595, 0.600) = 1605.49 &               2600 \\
        25 &  (1, -5, 5) &                        sos(0.520, 0.525) = 2495.09 &               2925 \\
        25 &  (1, -4, 4) &                        sos(0.555, 0.560) = 2474.14 &               2925 \\
        25 &  (1, -3, 3) &                        sos(0.595, 0.600) = 2080.54 &               2925 \\
\bottomrule
\end{tabular}
\caption{Sos Decomposition Value Covered Max SDP}
\label{tab:Sos Decompostion}
\end{table}


\section{Discussions and conclusions}
\label{chap:conclusion}
\label{section discussion}

We obtain that the SDP bound for spherical three-distance set in $\mathbb{R}^{23}$ is $2300$ and we also know the construction of such $2300$ points from tight spherical 7-designs. Therefore, we are curious whether the construction of spherical three-distance sets with 84 points in $\mathbb{R}^{7}$ exist or not. At first, we consider the orbit of a group. Eiichi Bannai suggested us to try the group PSL$(2,7)$ which has order $168$ just double of our target, $84$ points. We may find the nice initial vector such that the orbit of the group action by PSL$(2,7)$ forms a three-distance set having 84 points. However, the orbit of the irreducible or reducible representation of PSL$(2,7)$ in 7 dimension can not produce such three-distance sets. Second, we do the numerical non-linear optimization to calculate the energy minimizing configuration of $84$ points on $\mathbb{S}^{6}$.
Unfortunately, these two methods don't help us to find the construction. We will try harder to find a powerful algorithm to find this construction since we believe such construction exists for the 84 points to be a spherical three-distance set in $\mathbb{R}^{7}$. If we can find such construction, then in conjunction of our new bounds, we can prove that $A(\mathbb{S}^{6})=84$. Currently, the maximum size of known construction for spherical three-distance sets on $\mathbb{S}^{6}$ is $64$ which can be obtained from $3$-class association scheme with
 64 vertices, and Krein array [7, 6, 5; 1, 2, 3]. The half of $E_7$ root system is also a spherical three-distance in $\mathbb{R}^7$ with $63$ vertices and apparently this can not be the maximum.     

We also try to estimate the upper bounds without using the integer conditions of $d_i$ (Nozaki Theorem \ref{Nozaki thm}). In $\mathbb{R}^4$ and $\mathbb{R}^5$, we run SDP on sampling points of all possible inner product values, i.e. cut the intervals of $d_1, d_2, d_3$ into many parts. We cut $d_1 \in (-1,1), d_2 \in (-1,1)$ into 100 parts, and cut $d_3 \in (0,1)$ into 50 parts. In $\mathbb{R}^4$, the numerical SDP bound can be reduced to 26 and LP only can get the upper bound 27 \cite{musin2011bounds}. For example, if we estimate the bounds for $(d_1,d_2,d_3) = (-0.76, -0.16, 0.54)$, LP bound is 27 and SDP bound is 13. For some $d_i$, SDP bound is large and that moment HB is 26. Therefore, we only get the best bound 26 in $\mathbb{R}^4$ when we use SDP method.  However,  \cite{szollHosi2018constructions} proved the bound to be $13$ and had the construction. In $\mathbb{R}^5$, when we do not use Nozaki Theorem \ref{Nozaki thm}, the SDP bounds on some $d_i$ are larger than harmonic bound (45 points). Therefore, we do not improve the result of Musin and Nozaki \cite{musin2011bounds}. For other dimensions, the sampling points results do not improve the upper bounds and we save our time to do the sum of squares experiments.

\section*{Acknowledgments}
\indent
We greatly appreciate Che-Rong Lee generally supporting first author Feng-Yuan during his study in National Tsing Hua University. We thank Eiichi Bannai for the useful discussion. The second author W.-H. Yu is partially supported by the Ministry of Science and Technology of Taiwan (No. 107-2115-M-008-010-MY2).

\bibliographystyle{amsalpha} 
\bibliography{reference}
\end{document}